\newtheorem{theorem}{Theorem}
\newtheorem{proposition}[theorem]{Proposition}
\newtheorem{remark}[theorem]{Remark}
\newtheorem{lemma}[theorem]{Lemma}
\numberwithin{equation}{section}
\numberwithin{theorem}{section}
\newcommand{\proj}{\mathcal{A}}
\newcommand{\evo}{\mathcal{S}}
\newcommand{\R}{\mathbb{R}}
\newcommand{\N}{\mathbb{N}}
\newcommand{\Z}{\mathbb{Z}}
\newcommand{\Dx}{{\Delta x}}
\newcommand{\Dt}{{\Delta t}}
\renewcommand{\leq}{\leqslant}
\renewcommand{\geq}{\geqslant}
\renewcommand{\phi}{\varphi}
\newcommand{\Lip}{\mathrm{Lip}}
\newcommand{\DLip}{\mathrm{DLip}}
\newcommand{\hf}{{\unitfrac{1}{2}}}
\newcommand{\thf}{{\unitfrac{3}{2}}}
\newcommand{\iphf}{{i+\hf}}
\newcommand{\imhf}{{i-\hf}}
\newcommand{\imthf}{{i-\thf}}
\newcommand{\cell}{{\mathcal{C}}}
\renewcommand{\epsilon}{\varepsilon}
\definecolor{myred}{HTML}{FF3D3D}
\definecolor{mycyan}{HTML}{0474BE}
\definecolor{mygreen}{HTML}{1EB6D6}
\pgfplotsset{width=.4\textwidth,compat=newest}
\title{The optimal convergence rate of monotone schemes for conservation laws in the Wasserstein distance}
\author{Adrian M. Ruf\thanks{Department of Mathematics, University of Oslo, Norway (adrianru@math.uio.no)} , Espen Sande\footnotemark[1]\,\,\thanks{Department of Mathematics, University of Rome Tor Vergata, Italy (sande@mat.uniroma2.it)}\; and Susanne Solem\thanks{Department of Mathematical Sciences, NTNU, Trondheim, Norway (susanne.solem@ntnu.no)\newline
A.\,M. Ruf has received funding from the European Union’s Framework Programme for Research and Innovation Horizon 2020 (2014-2020) under the Marie Sk{\l}odowska-Curie Grant Agreement No. 642768.
E. Sande was supported by the European Research Council under the European Union’s Seventh Framework Programme (FP7/2007-2013) / ERC grant agreement 339643.}}
\begin{document}
%
%
%
%
%
%

\maketitle

\begin{abstract}
In 1994, Nessyahu, Tadmor and Tassa studied convergence rates of monotone finite volume approximations of conservation laws. For compactly supported, $\Lip^+$-bounded initial data they showed a first-order convergence rate in the Wasserstein distance. Our main result is to prove that this rate is optimal.
We further provide numerical evidence indicating that the rate in the case of $\Lip^+$-unbounded initial data is worse than first-order.
\end{abstract}

\section{Introduction}
In their 1994 paper, Nessyahu, Tadmor and Tassa \cite{nessyconv} showed that a large class of monotone finite volume methods converge to the entropy solution of the hyperbolic conservation law
\begin{equation}\label{eq:conslaw}
\begin{aligned}
u_t +f(u)_x &= 0, \qquad x \in \R, \quad t >0,\\
u(x,0)&=u_0(x),
\end{aligned}
\end{equation}
at a rate of $O(\Dx)$ in the 1-Wasserstein distance $W_1$ (using the different name $\Lip'$) under the assumption that $f$ is strictly convex ($f'' \geq \alpha > 0$) and the initial datum $u_0$ is compactly supported and $\Lip^+$-bounded, i.e.
\begin{equation}\label{eq:OSLC}
\frac{u_0(x+z)-u_0(x)}{z} \leq C, \qquad \forall \ x,z \in \R, \quad z \neq 0.
\end{equation}
Recently, Fjordholm and Solem \cite{fjordholm2016second} showed a convergence rate of $O(\Dx^2)$ in $W_1$ for initial data consisting of finitely many shocks. This raises the question whether the first-order rate in $W_1$ of \cite{nessyconv} can be improved. In this paper we show that this is not possible. We construct a compactly supported and $\Lip^+$-bounded initial datum for which the convergence rate in $W_1$ is no better than first-order. In other words, the rate $O(\Dx)$ in \cite{nessyconv} is optimal.

\subsection{The Wasserstein distance}\label{sec: Wasserstein}
In one dimension, the $W_1$-distance between two functions $u$ and $v$ takes on the simple form
\begin{align*}
W_1(u,v) = \int_\R \left|\int_{-\infty}^x (u(y)-v(y)) \, dy\right| dx,
\end{align*}
see \cite{Villani} for more details.
In higher dimensions it is the dual of the $\Lip$-norm, referred to as $\Lip'$ in \cite{nessyahu1994convergence}.
This distance was first utilized in the context of conservation laws in a series of papers by Nessyahu, Tadmor and Tassa \cite{tad91, nessyconv92, nessyconv} where they, among other things, prove convergence rates for a large class of approximations to the solution of the conservation law \eqref{eq:conslaw} in the $W_1$-distance.

Heuristically, one can think of the $W_1$-distance as measuring the minimal amount of work needed to move mass from one place to another. In the case of increasing initial data, a monotone scheme provides an approximation of the type shown in Figure \ref{fig: heuristic graph} after some time has elapsed.
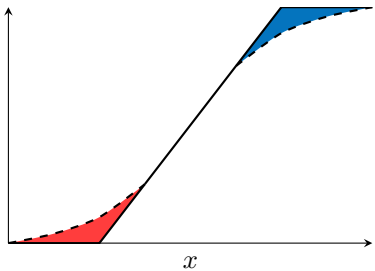
\begin{figure}[h]
\centering
\begin{tikzpicture}[yscale=.8]
\begin{axis}[axis lines=left, xtick = \empty, ytick = \empty,xlabel={$x$}]
\addplot+[sharp plot,mark=none, thick, black]
coordinates{
(0.375 , 0.25)
(0.625 , 0.75)
};
\addplot+[sharp plot, mark=none, thick, black, name path=A]
coordinates{
(0,0)
(0.25,0)
(0.375 , 0.25)
};
\addplot+[mark=none,thick, dashed, black, smooth, name path=B]
coordinates{%
(0 , 0)
(0.125 , 0.04)
(0.25 , 0.11)
(0.375 , 0.25)
};
\addplot+[myred] fill between[of =A and B];
\addplot+[sharp plot, mark=none, thick, black, name path=C]
coordinates{
(0.625 , 0.75)
(0.75,1)
(1,1)
};
\addplot+[mark=none,thick, dashed, black, smooth, name path=D]
coordinates{
(0.625 , 0.75)
(0.75  , 0.89)
(0.875 , 0.96)
(1 , 1)
};
\addplot+[mycyan] fill between[of =C and D];
\end{axis}
\end{tikzpicture}
\caption{Exact and approximate solution of \eqref{eq:conslaw}.}\label{fig: heuristic graph}
\end{figure}
Given that the $L^1$ error is $O(\Dx)$, the surplus of mass on the left (red area) needed to be moved is $O(\Dx)$ and it needs to be moved a distance of $O(1)$ to the shortage of mass on the right (blue area), see \cite{solem2018convergence} for a similar argument. Therefore, we expect the $W_1$-error to be no better than $O(\Dx)\cdot O(1)=O(\Dx)$ in this case. The goal of this paper is to make this heuristic argument rigorous.

\subsection{Convergence rates for monotone schemes in \texorpdfstring{$L^1$}{L1} and \texorpdfstring{$W_1$}{W1}}

The $L^1$ convergence rate for monotone schemes is restricted to $O(\Dx)$ as these schemes are at most first-order accurate (see Harten, Hyman, and Lax \cite{harten1976finite}).
However, the generic result on convergence rates of monotone schemes for the conservation law \eqref{eq:conslaw} is the $O(\Dx^\hf)$ rate in $L^1$, due to Kuznetsov \cite{kuznetsov76} which dates back to $1976$. By constructing a (pathological) initial datum, \c{S}abac showed in $1997$ that the $O(\Dx^\hf)$ rate for monotone methods is, in fact, optimal and cannot be improved without further assumptions on the class of initial data \cite{sabac} (see \cite{tang1995sharpness} for the linear advection equation). For that, \c{S}abac assumed strict convexity of the flux ($f''\geq \alpha >0$), which is the setting considered in the present paper.

Although the convergence rate $O(\Dx^\hf)$ is optimal, in some special cases higher convergence rates for monotone schemes have been shown. For example Harabetian \cite{harabetian1988rarefactions} proved that monotone schemes for centered rarefaction waves converge at a rate of $O(\Dx|\log \Dx|)$ in $L^1$, which is claimed to be optimal in \cite{wang19981}. Before that, Bakhvalov \cite{bakhvalov1961estimation} proved the same rate for an upwind scheme in a weaker norm related to $W_1$. Wang \cite{wang19981} showed that the rate $O(\Dx|\log \Dx|)$ in $L^1$ also appears close to the critical time of shock formation in certain special cases. Furthermore, Teng and Zhang \cite{zhang_teng} proved that monotone schemes converge at the optimal rate of $O(\Dx)$ in $L^1$ provided the initial datum is piecewise constant with a finite number of discontinuities that only allow for shocks at later times (in the case of a convex flux this means only downward jumps). Later, this result was extended to the convergence rate $O(\Dx^2)$ in $W_1$ by Fjordholm and Solem \cite{fjordholm2016second}.

The seminal work on convergence rates in the Wasserstein distance is Nessyahu and Tadmor's 1992 paper \cite{nessyconv92}. Using the dual equation studied by Tadmor in \cite{tad91}, the authors showed that conservative, $\Lip^+$-stable and $\Lip'$-consistent schemes converge at a rate of $O(\Dx)$ in the Wasserstein distance, for $\Lip^+$-bounded (i.e., rarefaction-free), compactly supported initial data. Examples of schemes that satisfy these assumptions are the Lax--Friedrichs, Engquist--Osher, and Godunov scheme.
Nessyahu, Tadmor and Tassa later used that framework to prove the same convergence rate for so-called Godunov type schemes. In addition to the aforementioned schemes, a subset of (formally) second-order MUSCL schemes also falls into this class.
Notably, Nessyahu and Tassa \cite{nessyahu1994convergence} also covered the case of $\Lip^+$-unbounded initial data and showed a convergence rate of $O(\epsilon|\log\epsilon|)$ in $W_1$ for viscous regularizations of \eqref{eq:conslaw}.

Table \ref{tab:survey of convergence rates} provides an overview of the results concerning convergence rates for monotone schemes in both $L^1$ and the Wasserstein distance.

\begin{table}[h]
\centering
\renewcommand{\arraystretch}{1.5}
\begin{tabular}{p{.4\textwidth}cccc}
	\toprule
	Case considered & $L^1$ rate & Optimal & $W_1$ rate & Optimal\\
	\midrule
	General $L^1\cap BV$ initial data \cite{kuznetsov76}& $\Dx^\hf$ & \cite{sabac} & --&--\\
	$\Lip^+$-bounded, compactly supported initial data \cite{kuznetsov76,nessyconv92,nessyconv} & $\Dx^\hf$ & --&$\Dx$ & Thm. \ref{th:optimality} \\
	Rarefaction solutions \cite{harabetian1988rarefactions} & $\Dx|\log\Dx|$ &\cite{harabetian1988rarefactions,wang19981} &--&-- \\
	Decreasing data (before shocks) \cite{wang19981} &$\Dx|\log\Dx|$ & \cite{wang19981} &-- &--\\
	Decreasing piecewise constant initial datum (finitely many pieces) \cite{zhang_teng,fjordholm2016second} & $\Dx$ &\cite{zhang_teng} & $\Dx^2$ & Rem. \ref{rem:ineq}\\
	\bottomrule
\end{tabular}
\caption{Short overview of results regarding rates of convergence for monotone schemes.}\label{tab:survey of convergence rates}
\end{table}

\begin{remark}\label{rem:ineq}
As remarked in \cite{nessyconv92} and \cite{fjordholm2016second} one can recover the well-known half-order rate in $L^1$ from the first-order rate in $W_1$ by utilizing the Sobolev interpolation inequality 
$\|Dg\|_{L^1(\R)}\leq C \|D^2g\|_{L^1(\R)}^\hf\|g\|_{L^1(\R)}^\hf,$
see e.g. \cite[Thm.~9.3]{Fried}, as follows.
Let $u$ be the solution of the conservation law \eqref{eq:conslaw} and $u_\Dx$ a monotone approximation to it. Then let $Dg$ be a suitable approximation of the error, $u-u_\Dx$, such that $\|D^2 g\|_{L^1}$ is bounded by the total variation of $u_0$. Then it follows that $\|u-u_\Dx\|_{L^1(\R)}\leq C \operatorname{TV}(u_0)^\hf W_1(u,u_\Dx)^\hf$. Note that this inequality can also be used in the other direction: The optimality of the convergence rate $O(\Dx)$ in $L^1$ for piecewise constant, decreasing initial data implies the optimality of the convergence rate $O(\Dx^2)$ in $W_1$.
Moreover, optimality of the convergence rate $O(\Dx^\hf)$ in $L^1$ for general $L^1\cap BV$ initial data implies that the convergence rate in $W_1$ cannot be better than $O(\Dx)$ in the general case.
\end{remark}

\section{Preliminaries}
In this section we present the class of numerical methods and the observations needed to prove the optimal rate.
\subsection{Monotone schemes and first-order convergence in \texorpdfstring{$W_1$}{W1}}
Let $x_\imhf$, $i \in \Z$, be equidistant points, $\Dx$ apart, and let $\cell_i = [x_\imhf, x_\iphf)$, $t^n = n\Dt$, $n\in\N$, and $\lambda = \Dt/ \Dx$. 
Then we consider schemes of the form
\begin{equation}\label{eq:monotonemethods}
\begin{split}
u_i^{n+1} &= u_i^{n}-\lambda \left(F(u_i^{n},u_{i+1}^{n}) - F(u_{i-1}^{n},u_{i}^{n})\right), \\
u_i^0 &= \frac{1}{\Dx} \int_{\cell_i} u_0(x)\, dx,
\end{split}
\qquad\qquad (i\in\Z)
\end{equation}
where the numerical flux function $F(\cdot,\cdot)$ is consistent with the flux $f$, i.e. $F(u,u)=f(u)$. The scheme is monotone if and only if $F(\cdot,\cdot)$ is nondecreasing in the first argument and nonincreasing in the second and a certain CFL condition is satisfied.

A numerical approximation $u_\Dx$, defined by $u_\Dx(x,t)=u_i^{n}$ for $(x,t)\in\cell_i\times[t^n,t^{n+1})$, is said to be discrete $\Lip^+$-stable if
\begin{align*}
\|u_\Dx(t)\|_{\DLip^+} := \max_x \frac{u_\Dx(x+\Dx,t) - u_\Dx(x,t)}{\Dx} \leq C, \quad \forall t>0,
\end{align*}
and a numerical method is called $W_1$-consistent in \cite{nessyconv92,nessyconv} if $$W_1(u_{\Dx}(\cdot,0),u_0)+W_1((u_{\Dx})_t,-f(u_{\Dx})_x)\leq O(\Dx),$$
where the second term is the Wasserstein distance for functions in space-time.
Here, the derivatives involving $u_{\Dx}$ are defined in \cite{nessyconv}.
Let $\evo_t$ be the exact evolution operator to \eqref{eq:conslaw}, i.e. such that $\evo_t u(\cdot,s)=u(\cdot,s+t)$, and let $\proj$ be the piecewise constant projection operator, 
\begin{align}
\proj u (x) = \frac{1}{\Dx} \int_{\cell_i} u(y)\, dy, \quad x \in \cell_i. \label{projection operator}
\end{align}
Then the method \eqref{eq:monotonemethods} is $W_1$-consistent if it can be rewritten in a form $u_i^{n+1}=P\proj \evo u_\Dx(t^{n}+)$. Here $u_\Dx(t^{n}+)$ is the numerical approximation calculated with \eqref{eq:monotonemethods} at $t^n$ and $P$ is a scheme-dependent projection operator that satisfies
\begin{align}\label{eq:w1consistent}
W_1(P \proj v, v) \leq O(\Dx^2)\operatorname{TV}(v),
\end{align}
see \cite[Thm. 2.1]{nessyconv}.
Assuming that the numerical approximation is $\Lip^+$-stable and $W_1$-consistent, Nessyahu et al. proved the following.
\begin{theorem}[Nessyahu et al. {\cite[Thm.~2.3]{nessyconv}}]\label{th:nessy}
Assume that $u_0$ is compactly supported and $\Lip^+$-bounded, see \eqref{eq:OSLC}, and that the numerical approximation $u_\Dx$ is discrete $\Lip^+$-stable and $W_1$-consistent. Then for any $T>0$,
\begin{align*}
W_1\big(u(t),u_\Dx(t)\big) \leq C_T \Dx,
\end{align*}
for all $0\leq t\leq T$.
\end{theorem}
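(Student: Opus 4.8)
The plan is to derive the bound from two ingredients used by Nessyahu, Tadmor and Tassa: first, that the scheme is $W_1$-consistent with rate $\Dx$; and second, that $\Lip^+$-stability together with $W_1$-consistency forces $W_1$-convergence at the consistency rate. The first ingredient rests on the representation $u_i^{n+1}=P\proj\evo u_\Dx(t^n+)$ together with the one-step estimate \eqref{eq:w1consistent}; the second is a stability principle for the exact solution operator, proved by Tadmor's dual-equation technique, and it is there that the one-sided Lipschitz hypothesis \eqref{eq:OSLC} on $u_0$ is essential.

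For the consistency step, write $w^n:=u_\Dx(t^n+)$ and $v:=\evo_{\Dt}w^n$, so that $w^{n+1}=P\proj v$ and the defect introduced at the grid time $t^{n+1}$ is $w^{n+1}-v$, of $W_1$-size $W_1(P\proj v,v)\leq O(\Dx^2)\operatorname{TV}(v)$ by \eqref{eq:w1consistent}. Since the exact evolution $\evo$ and the monotone scheme are both total-variation diminishing while the averaging $\proj$ does not increase total variation, $\operatorname{TV}(v)=\operatorname{TV}(\evo_{\Dt}w^n)\leq\operatorname{TV}(w^n)\leq\operatorname{TV}(\proj u_0)\leq\operatorname{TV}(u_0)$, which is finite precisely because $u_0$ is compactly supported and satisfies \eqref{eq:OSLC}. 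Summing over the $N=O(1/\Dx)$ grid times up to a fixed time $T$, the total defect measured in the space-time $W_1$-distance is $O(1/\Dx)\cdot O(\Dx^2)\operatorname{TV}(u_0)=O(\Dx)$, and the initial error satisfies $W_1(\proj u_0,u_0)\leq O(\Dx)$; hence the scheme is $W_1$-consistent with rate $\Dx$.

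The core of the matter is the stability principle: if $v$ is $\Lip^+$-bounded uniformly in time and in $\Dx$, then $W_1(u(T),v(T))$ should be bounded by $C_T$ times the sum of the initial error $W_1(u(0),v(0))$ and the space-time $W_1$-size of the defect of $v$, with $C_T$ depending on $T$, on $f$, and on the $\Lip^+$-bounds but not on $\Dx$. The mechanism is the dual equation: to estimate $W_1(u(T),v(T))=\sup\{\langle u(T)-v(T),\psi\rangle:\Lip(\psi)\leq1\}$, fix such a $\psi$, set $a=\int_0^1 f'\big(\theta u+(1-\theta)v\big)\,d\theta$ so that $f(u)-f(v)=a(u-v)$, and let $\phi$ solve the backward linear transport equation $\phi_t+a\phi_x=0$ with $\phi(\cdot,T)=\psi$. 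Then $\frac{d}{dt}\langle u-v,\phi\rangle$ equals the pairing of the defect with $\phi$, so integration in time bounds $\langle u(T)-v(T),\psi\rangle$ by the initial error and the accumulated defect, each weighted by $\Lip(\phi(\cdot,t))$; and $\Lip(\phi)$, run backward from $T$, grows at a rate governed by $a_x^+$, where $a_x=\int_0^1 f''(\cdot)\big(\theta u_x+(1-\theta)v_x\big)\,d\theta$. This is the point at which $\Lip^+$-boundedness is indispensable: $a_x^+$ is controlled by $\sup f''$ times the positive parts of the one-sided slopes of $u$ and $v$, and the discrete $\Lip^+$-stability of $u_\Dx$ --- uniform in both time and $\Dx$ --- together with Oleinik's one-sided estimate for the exact solution keeps the compression in the dual problem, hence $\Lip(\phi)$ and $C_T$, bounded independently of $\Dx$. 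Combining this with the consistency estimate gives $W_1(u(t^N),u_\Dx(t^N))\leq C_T\Dx$, and for $t$ not a grid time a short argument using finite propagation speed --- the exact solution moves only $O(\Dt)=O(\Dx)$ in $W_1$ over one step --- yields the bound for all $0\leq t\leq T$.

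The genuine difficulty is making the dual-equation estimate rigorous while keeping $C_T$ free of negative powers of $\Dx$. The obstruction is that the natural ``exact evolution between grid times'' version of $u_\Dx$ is $\Lip^+$-bounded only by $O(1/\Dt)$ near each grid time --- a rarefaction fan opens out of every upward jump of the piecewise constant data --- so one cannot feed it into the continuous Oleinik bound; instead one must work with the discrete $\Lip^+$ norm of $u_\Dx$, which is uniformly bounded, and with correspondingly discrete notions of the defect and of the dual problem, as carried out in \cite{nessyconv92,nessyconv}. The remaining technical points are handling shocks --- where $u_x$ and $v_x$ carry negative Dirac masses that only help the estimate --- and the upward-jump interfaces with care, and verifying that the one-step estimate \eqref{eq:w1consistent} holds for the scheme-dependent projection $P$ of each admissible method.
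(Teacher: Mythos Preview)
The paper does not prove Theorem~\ref{th:nessy}; it is quoted verbatim as a known result of Nessyahu, Tadmor and Tassa \cite[Thm.~2.3]{nessyconv} and used as background for the paper's actual contribution, which is the optimality result Theorem~\ref{th:optimality}. There is therefore no proof in the paper to compare your proposal against.

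That said, your sketch is a faithful outline of the original Nessyahu--Tadmor--Tassa argument: accumulate the one-step projection errors \eqref{eq:w1consistent} over $O(1/\Dx)$ steps using TVD to obtain $O(\Dx)$ consistency, and then invoke the dual backward-transport equation from \cite{tad91} to convert consistency into stability, with the discrete $\Lip^+$-bound controlling the growth of $\Lip(\phi)$ and hence keeping $C_T$ independent of $\Dx$. You also correctly identify the genuine technical point---that one must work with the \emph{discrete} $\Lip^+$ norm rather than a continuous one, since the piecewise-constant data produce rarefaction fans with $O(1/\Dt)$ continuous slope---and defer its resolution to \cite{nessyconv92,nessyconv}, which is exactly what the present paper does as well. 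As a reconstruction of the cited argument your proposal is sound; it simply is not something the paper itself undertakes.
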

Note that this theorem includes all monotone schemes and even some (formally) higher-order schemes, as long as they are discrete $\Lip^+$-stable and satisfy \eqref{eq:w1consistent}. Examples of monotone numerical schemes that satisfy these assumptions are the Godunov, Lax--Friedrichs and Enquist--Osher schemes, see \cite{nessyconv}. 

The Godunov scheme is one (or \emph{the}) example of a monotone scheme. It consists of piecewise constant projection and exact evolution in time. Using the operators $\evo$ and $\proj$, it can be written in the simple form 
\begin{align}\label{eq:numericalsolution}
u_\Dx(x,t) = \left(\evo_{t-t^n}\proj\left(\evo_{\Dt}\proj \right)^n u_0\right)(x,t), \qquad t \in [t^n,t^{n+1}).
\end{align}

\subsection{The error equation}\label{subsec: error equation}
Let $E(x,t) = \int_{-\infty}^x (u(y,t)-v(y,t)) \, dy$, where $u$ and $v$ are two solutions to \eqref{eq:conslaw}, possibly with different initial data $u_0$ and $v_0$. Then
\begin{equation*}
W_1(u(t),v(t))=\|E(t)\|_{L^1(\R)}.
\end{equation*}
Due to the $L^1$-contraction property of solutions to \eqref{eq:conslaw}, $E$ is Lipschitz continuous in both time and space. 
Now let
\begin{equation*}
	a(u,v) = \int_0^1 f'(\gamma u +(1-\gamma)v) \, d\gamma.
\end{equation*}
Then, as $u$ and $v$ are solutions of \eqref{eq:conslaw},
$E$ satisfies the transport equation
\begin{equation}\label{eq:erroreq}
\begin{split}
E_t +a(u,v)E_x & = 0,  \\
E(x,0) & = \int_{-\infty}^x(u(y,0)-v(y,0)) \, dy.
\end{split}
\end{equation}
Hence, if $E(0)\geq 0$, then $E(t) \geq 0$ at any later time $t>0$. Given nondecreasing initial data and a strictly convex flux, $u$ and $v$ are continuous for $t>0$ and thus $a(u,v)$ is as well. It follows that \eqref{eq:erroreq} is well-defined for any $t>0$.

After an integration by parts, we can see that the time-derivative of the Wasserstein distance between two solutions of \eqref{eq:conslaw} satisfies
\begin{align*}
\frac{d}{dt} \|E(t)\|_{L^1(\R)} \ = \ -\int_\R a(u,v)(t)\partial_x|E(t)| \, dx \ = \ \int_\R D_x a(u,v)(t) |E(t)| \,dx,
\end{align*}
where $D_x a(u,v)$ is to be understood as the distributional derivative of $a(u,v)$. 
Note that since the flux is strictly convex ($f'' \geq \alpha > 0$) we have
\begin{equation*}
	D_x a(u,v) \geq \frac{\alpha}{2}(u_x+v_x).
\end{equation*}
It follows that $D_x a(u,v)$, and consequently $\frac{d}{dt}\|E(t)\|_{L^1(\R)}$, is nonnegative, if $u$ and $v$ are nondecreasing.


\subsection{The projections}
This section contains two useful lemmas on the projection operator $\proj$ in the case of nondecreasing functions. The first one shows that the primitive of the projection error is nonnegative.
\begin{lemma}\label{lem: positivity projection}
For a nondecreasing function $v$, the projection operator $\proj$ defined in \eqref{projection operator} satisfies
\begin{align}\label{eq:positive_proj}
\int_{-\infty}^x \big(\proj v - v\big) (y) \, dy \geq 0
\end{align}
for all $x\in\R$.
\end{lemma}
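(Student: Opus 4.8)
The plan is to work with the primitive of the projection error,
$G(x) := \int_{-\infty}^x\big(\proj v - v\big)(y)\,dy$,
and to prove the stronger pointwise bound $G \geq 0$ by examining $G$ separately on each computational cell. Two observations drive the argument. First, since $\proj v$ is by definition the average of $v$ over each cell $\cell_i = [x_\imhf, x_\iphf)$, the integral of $\proj v - v$ over any full cell vanishes; hence $G(x_\iphf) = G(x_\imhf)$ for every $i$, so $G$ takes one and the same value at all cell interfaces, and that common value is $0$ once we know that $G(x)\to 0$ as $x\to-\infty$. Second, on a single cell $\cell_i$ we have $G'(x) = \bar v_i - v(x)$ with $\bar v_i := \frac{1}{\Dx}\int_{\cell_i} v$, and since $v$ is nondecreasing, $G'$ is nonincreasing on $\cell_i$; that is, $G$ is concave on each cell.

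Combining the two: on $\cell_i$ the (continuous) function $G$ is concave and vanishes at both endpoints $x_\imhf$ and $x_\iphf$, hence it lies above the chord joining these endpoint values, which is identically zero; therefore $G \geq 0$ on $\cell_i$. As $i$ was arbitrary, this yields \eqref{eq:positive_proj}. Equivalently, and without invoking concavity: on $\cell_i$ the function $G$ increases as long as $v(y)\le\bar v_i$ and decreases thereafter, and it starts and ends at the value $0$, so it stays nonnegative in between.

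The only point that needs a little care is the claim that $G$ is well-defined and vanishes at the cell interfaces, i.e. the behaviour of the primitive near $-\infty$. This is immediate in the setting in which the lemma is applied, where the relevant nondecreasing profile agrees with a constant outside a compact set, so that $\proj v - v$ is compactly supported and $G$ vanishes identically to the left of its support; more generally it suffices that $\proj v - v$ be integrable at $-\infty$, in which case the full-cell cancellation from the first observation forces the common interface value of $G$ to be $0$. Once this is settled, the heart of the proof is just the one-line monotonicity/concavity observation above, applied cell by cell.
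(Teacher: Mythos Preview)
Your proof is correct and follows essentially the same approach as the paper: split the primitive at the left cell interface, use mass conservation to kill the full-cell contributions, and then argue that the partial-cell integral $\int_{x_\imhf}^x(\proj v - v)(y)\,dy$ is nonnegative because $\proj v$ equals the cell average of the nondecreasing $v$. Your concavity (equivalently, increase-then-decrease) argument is just a more explicit justification of the last step, which the paper states in one line; the added remark about behaviour at $-\infty$ is also a reasonable clarification of something the paper leaves implicit.
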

\begin{proof}
For $x\in\cell_i$ we find that
\begin{align*}
	\int_{-\infty}^x (\proj v - v)(y)\, dy  = \int_{-\infty}^{x_\imhf}(\proj v -v)(y)\, dy + \int_{x_{\imhf}}^x(\proj v -v)(y)\, dy,
\end{align*}
where the first term vanishes due to conservation of mass of $\proj$. As $\proj v $ is constant in $\cell_i$ and is the average of the function $v$ which is nondecreasing,
\begin{align*}
\int_{x_{\imhf}}^x(\proj v -v)(y)\, dy \geq 0,
\end{align*}
and we can conclude that \eqref{eq:positive_proj} holds.
\end{proof}
The second lemma states that the projection operator $\proj$ preserves positivity of the difference between the primitives.
\begin{lemma}\label{lem:positivity} Let $u$ and $v$ be two nondecreasing functions. Then
\begin{align}\label{eq:positivity}
\int_{-\infty}^x (v-u)(y) \, dy  \geq 0 \quad \forall \ x\in\R \quad \Rightarrow \quad \int_{-\infty}^x (\proj v- u)(y) \, dy  \geq 0 \quad \forall \ x\in\R.
\end{align}
\end{lemma}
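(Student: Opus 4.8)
The plan is to deduce this immediately from Lemma~\ref{lem: positivity projection} by splitting the integrand and using additivity of the integral. For a fixed $x\in\R$ write, pointwise a.e.,
\[
\proj v - u = (\proj v - v) + (v - u),
\]
and integrate from $-\infty$ to $x$. Linearity gives
\[
\int_{-\infty}^x (\proj v - u)(y)\,dy = \int_{-\infty}^x (\proj v - v)(y)\,dy + \int_{-\infty}^x (v - u)(y)\,dy .
\]
The first term on the right is nonnegative by Lemma~\ref{lem: positivity projection}, since $v$ is nondecreasing; the second term is nonnegative by the hypothesis of the implication. Adding the two inequalities yields $\int_{-\infty}^x(\proj v - u)(y)\,dy\geq 0$, and since $x$ was arbitrary this is exactly the desired conclusion.

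A few routine points should be checked when writing this out cleanly. The only thing one needs for the splitting step to be legitimate is that the relevant primitives are well-defined: the hypothesis already presupposes that $\int_{-\infty}^x(v-u)$ exists and is finite, and Lemma~\ref{lem: positivity projection} presupposes that $\int_{-\infty}^x(\proj v - v)$ exists; their sum then exists and equals $\int_{-\infty}^x(\proj v-u)$, which is what we want to control. It is worth noting that this argument does not actually use monotonicity of $u$ at all — that assumption is inherited from the ambient setting of the paper (where both sides are compared as monotone functions in the subsequent Wasserstein estimates), and only monotonicity of $v$ enters, via Lemma~\ref{lem: positivity projection}.

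There is essentially no obstacle here: once Lemma~\ref{lem: positivity projection} is available the statement is a one-line consequence. (If one instead wished to avoid citing that lemma, a direct argument also works: on each cell $\cell_i$ the function $x\mapsto\int_{-\infty}^x(\proj v-u)$ has derivative $\bar v_i - u(x)$ with $\bar v_i$ the cell average of $v$, hence is concave because $u$ is nondecreasing, and its values at the cell endpoints $x_\imhf$, $x_\iphf$ equal $\int_{-\infty}^{x_\imhf}(v-u)$ and $\int_{-\infty}^{x_\iphf}(v-u)$ by conservation of mass of $\proj$, which are nonnegative by hypothesis; a concave function with nonnegative endpoint values is nonnegative on the whole interval. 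This variant is where $u$ being nondecreasing would be used.) The short proof via Lemma~\ref{lem: positivity projection} is the one I would record.
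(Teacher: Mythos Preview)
Your proof is correct and is essentially identical to the paper's: both split $\proj v - u = (\proj v - v) + (v - u)$, integrate, and apply Lemma~\ref{lem: positivity projection} to the first summand and the hypothesis to the second. Your remark that only monotonicity of $v$ is actually used is accurate, and the alternative concavity argument you sketch is a nice self-contained variant, but the recorded short proof matches the paper exactly.
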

\begin{proof}
Assume that the inequality to the left in \eqref{eq:positivity} holds. Then, using Lemma \ref{lem: positivity projection},
\begin{align*}
\int_{-\infty}^x (\proj v-u)(y) \, dy =& \int_{-\infty}^{x} ( v-u)(y) \, dy + \int_{-\infty}^x \big(\proj v - v\big) (y) \, dy \ \geq 0.
\end{align*}
\end{proof}

\section{Optimality}
With the observations in the previous section, we can prove that the first-order rate in Theorem~\ref{th:nessy} is optimal in $W_1$:
\begin{theorem}\label{th:optimality}
Let $f$ be strictly convex and let $u_0$ be compactly supported and $\Lip^+$-bounded, i.e., satisfy \eqref{eq:OSLC}. Then the optimal convergence rate in the Wasserstein distance of monotone finite volume schemes \eqref{eq:monotonemethods} satisfying \eqref{eq:w1consistent}, is $O(\Dx)$. 
\end{theorem}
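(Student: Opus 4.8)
The plan is to prove sharpness by exhibiting a single datum and a single member of the class for which the rate is attained: one constructs a compactly supported, $\Lip^+$-bounded $u_0$ and a fixed time $T>0$ such that the Godunov scheme \eqref{eq:numericalsolution} (which belongs to the class \eqref{eq:monotonemethods} and satisfies \eqref{eq:w1consistent}) obeys $W_1(u(T),u_\Dx(T))\ge c\,\Dx$ for all small $\Dx$, with $c>0$ independent of $\Dx$; combined with Theorem~\ref{th:nessy} this shows that $O(\Dx)$ is both an upper bound and unimprovable, i.e.\ optimal. Guided by the heuristic of Section~\ref{sec: Wasserstein}, I would take $u_0$ to be (a suitable variant of) a ``ramp with a trailing shock'': constant on $(-\infty,0]$ with a value kept away from the sonic value $(f')^{-1}(0)$, strictly increasing on $[0,a]$ with slope below the $\Lip^+$-constant and with genuine corners (jumps of $u_0'$) at $x=0$ and $x=a$, constant on $[a,b]$, and dropping down at $x=b$ so that $u_0$ is compactly supported. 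For $f''\ge\alpha>0$ and $t\in(0,T]$ with $T$ small, the entropy solution then consists of the left constant state, a rarefaction fan issued from the corner at $x=0$ carrying a weak (gradient) discontinuity at each of its two edges, an intermediate constant state, and a single shock; the two weak discontinuities remain a distance $\ge a$ apart, and by finite speed of propagation the solution is nondecreasing to the left of the shock, which is the region where the preliminary lemmas apply.

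The first step is to fix the sign of the error, which removes any possibility of cancellation. Set $E(x,t)=\int_{-\infty}^x(u-u_\Dx)(y,t)\,dy$, so $W_1(u(t),u_\Dx(t))=\|E(t)\|_{L^1}$. Working left of the shock, where $u$ and $u_\Dx$ are nondecreasing: $E(\cdot,0)\le0$ by Lemma~\ref{lem: positivity projection} applied to $u_0$; the transport equation \eqref{eq:erroreq} preserves the sign of $E$ along each exact-evolution substep, its coefficient $a(u,u_\Dx)$ being continuous there; and each Godunov re-projection $\proj$ preserves it by Lemma~\ref{lem:positivity}. Hence $E\le0$ throughout, so $\|E(t)\|_{L^1}=-\int_\R E(x,t)\,dx$, and by the computation in Section~\ref{subsec: error equation} together with the nonnegative contribution supplied by Lemma~\ref{lem: positivity projection} at each projection, $t\mapsto\|E(t)\|_{L^1}$ is nondecreasing.

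The quantitative heart of the proof is a \emph{lower} bound for the numerical error generated at the left corner of the fan, matching the $O(\Dx^2)$-per-step upper bound of $W_1$-consistency. There the exact solution has a fixed $O(1)$ jump $c_0>0$ in $u_x$ while $u_\Dx$ is produced by a scheme whose effective viscosity is of order $\Dx$; the corner being non-sonic, its edge travels at a bounded constant speed, and a local (discrete boundary-layer) analysis in the variable $\xi=(x-x_{\mathrm{edge}}(t))/\sqrt{\Dx}$ shows that the rescaled error $(u-u_\Dx)/\sqrt{\Dx}$ settles onto a fixed, sign-definite profile. One extracts from this that, just past a neighbourhood of the corner, $-E(x,T)=\int_{-\infty}^x(u_\Dx-u)(y,T)\,dy\ge c_1\Dx$ with $c_1=c_1(T)>0$, so the primitive has ``filled up'' to height of order $\Dx$. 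It then remains to show this surplus is not eroded over the $O(1)$-wide fan; using that $-E\ge0$ everywhere, that the scheme's viscous correction and the projections contribute only lower-order, sign-controlled changes to the primitive across the interior of the fan, and the monotonicity of $\|E\|_{L^1}$, one concludes $-E(x,T)\ge\tfrac12 c_1\Dx$ throughout an interval of length $\ge a/2$ lying between the two weak discontinuities. Therefore $W_1(u(T),u_\Dx(T))=\|E(T)\|_{L^1}\ge\tfrac{a}{4}c_1\Dx=:c\,\Dx$. (The argument is unchanged for any scheme in the class whose re-projection $P\proj$ preserves the sign of the primitive, e.g.\ the Engquist--Osher scheme; one such scheme suffices for optimality.)

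I expect the main obstacle to be exactly this quantitative step: turning the known $O(\Dx^2)$ upper bound into a matching lower bound on the integrated error that the monotone scheme produces at a weak discontinuity of the solution — a sharp, essentially two-sided local analysis, uniform in $\Dx$ — and then showing that this order-$\Dx$ displaced mass is not cancelled by the comparably sized viscous drift in the interior of the fan. The sign lemmas of the previous section are what make the second part tractable: they turn a delicate tracking of near-cancellations into a one-sided, monotone estimate, and they allow the shape of $u_0$ to be chosen so that the fan-interior contribution reinforces rather than opposes the corner surplus. A minor technical point is the trailing shock, handled by finite speed of propagation so that the nondecreasing-data hypotheses of Lemmas~\ref{lem: positivity projection}, \ref{lem:positivity} and of the error-equation monotonicity are in force on the region that matters.
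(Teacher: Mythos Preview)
Your structural setup --- the ramp-plus-trailing-shock datum, the restriction to the increasing region by finite speed of propagation, the sign control on $E$ via Lemmas~\ref{lem: positivity projection} and~\ref{lem:positivity}, and the resulting monotonicity of $\|E(t)\|_{L^1}$ --- is exactly what the paper does, and that part is sound. The divergence, and the gap, is at the quantitative step.

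You propose to generate the $O(\Dx)$ error from the corner at the left edge of the fan via a discrete boundary-layer analysis in the inner variable $\xi=(x-x_{\mathrm{edge}})/\sqrt{\Dx}$, extracting a fixed sign-definite profile and then arguing it is not eroded across the fan. This is only asserted, not proved, and (as you yourself flag) is the hard part; making such a profile argument rigorous and uniform for a monotone scheme is substantial work, and the ``not eroded'' half is a second nontrivial estimate on top of it. The paper bypasses all of this with a far more elementary mechanism. It does not extract the error from a corner at all; it extracts it from the cell-averaging at \emph{every} time step over the \emph{entire} increasing region. For nondecreasing $v=u_\Dx(t^n-)$ one computes directly (an integration by parts, then reading off the Riemann structure inside each cell, cf.\ Figure~\ref{fig: Riemann problem in cell_i}) that
\[
W_1(\proj v,v)=\tfrac12\sum_i\int_{\cell_i}\int_{x_{\imhf}}^x\bigl(v(x)-v(y)\bigr)\,dy\,dx\ \ge\ \tfrac{\Dx\Dt}{2}(1-\beta_1\lambda)\,\beta_2\,\mathrm{TV}(v),
\]
using only that the exactly-evolved solution equals $u_{i-1}^{n-1}$ on a left strip of width $f'(u_{i-1}^{n-1})\Dt$ and $u_i^{n-1}$ on a right strip of width $\Dx-f'(u_i^{n-1})\Dt$ in each $\cell_i$. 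Your sign/monotonicity argument already yields (this is Proposition~\ref{prop:biggerthansum}) $W_1(u(t),u_\Dx(t))\ge\sum_{n=0}^N W_1\bigl(\proj\tilde u_\Dx(t^n-),\tilde u_\Dx(t^n-)\bigr)$, so summing $N\sim t/\Dt$ such contributions gives $W_1(u(t),u_\Dx(t))\ge c\,t\,\mathrm{TV}(\tilde u_0)\,\Dx$ with no asymptotics whatsoever. In short: the error is produced by averaging a monotone profile, not by a corner layer, and recognising this makes the proof completely elementary.
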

\noindent We postpone the (short) proof to the end of the section.

The initial datum $u_0$ has to be $\Lip^+$-bounded and compactly supported for Theorem~\ref{th:nessy} to hold. We will therefore consider compactly supported initial data $u_0$ consisting of one increasing, $\Lip^+$-bounded part, increasing from $0$ to $M$ and one decreasing part, decreasing from $M$  to $0$.
One realization of a suitable initial datum is
\begin{align}\label{eq:initialdata}
u_0(x)=
\begin{cases}
0, \quad & \phantom{x_M\leq ~\,} x < x_s,\\
M\frac{x-x_s}{x_0-x_s}, \quad  & \mathop{\rlap{$x_s$}}\phantom{x_M}\leq x <x_0, \\
M, \quad & \mathop{\rlap{$x_0$}}\phantom{x_M} \leq x <x_M,\\
M \left(1-\frac{x-x_M}{x_e-x_M}\right), \quad & \mathop{\rlap{$x_M$}}\phantom{x_M} \leq  x <x_e, \\
0, \quad & \phantom{x_M \leq ~\,} x\geq x_e,
\end{cases}
\end{align}
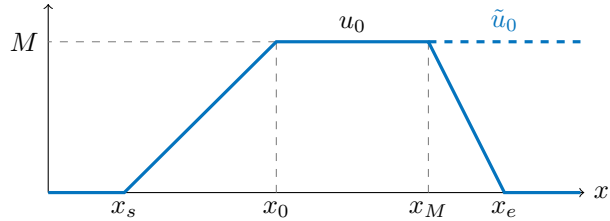
\begin{figure}[h]
\begin{center}
\begin{tikzpicture}	
	\draw[->] (0,0) -- (7.05,0) node[at end, right] {$x$};
	\draw[->] (0,0) -- (0,2.5);
	\draw[dashed, gray] (3,2) -- (0,2) node[left, black] {$M$};
	\node[below] at (1,0) {$x_s$};
	\draw[dashed, gray] (3,2) -- (3,0) node[below, black] {$x_0$};
	\draw[dashed, gray] (5,2) -- (5,0) node[below, black] {$x_M$};
	\node[below] at (6,0) {$x_e$};
	\draw[mycyan,very thick] (0,0) -- (1,0) -- (3,2) -- (5,2) -- (6,0) -- (7,0);
	\node[above] at (4,2) {$u_0$};
	\draw[mycyan,very thick, dashed] (5,2) -- (7,2) node[midway,above]{$\tilde{u}_0$};
\end{tikzpicture}
\caption{The initial datum used to show optimality of the convergence rate}
\label{fig: initial datum}
\end{center}
\end{figure}
where $x_s<x_0<x_M<x_e$, and $[x_s,x_e]$ is the support of $u_0$, see Figure \ref{fig: initial datum}.

Since we only expect the increasing part of $u_0$ to contribute to the reduced convergence rate $O(\Dx)$, we will simplify our calculations by seperating the increasing parts of $u$ and $u_\Dx$ from the decreasing parts.
To that end, for any fixed time $T>0$, without restrictions, we will assume that $x_M-x_0$ is big enough such that there exists an $x^*$ satisfying $x_0<x^*<x_M$ where $u_\Dx(x^*+f'(M)t,t) = M$ for all $0\leq t\leq T$. Using $x^*$ we introduce the increasing auxiliary functions
\begin{align}
\begin{split}
	\tilde{u}_0(x) &= \begin{cases}
		u_0(x),  &  x<x^* \\
		M, \quad & x \geq x^*,
	\end{cases}\\
	\tilde{u}(x,t) &= \begin{cases}
		u(x,t), & x<x^* +f'(M)t,\\
		M, & x\geq x^* +f'(M)t,
	\end{cases}\\
	\tilde{u}_\Dx (x,t) &= \begin{cases}
		u_\Dx(x,t), & x<x^* +f'(M)t\\
		M, & x\geq x^* +f'(M)t.
	\end{cases}
\end{split}\label{definition tilde}
\end{align}
Then, the assumption above implies
\begin{equation*}
	\int_{-\infty}^{x^*+f'(M) t} \left(\tilde{u}-\tilde{u}_\Dx\right)\, dy = \int_{-\infty}^{x^*+f'(M) t} \left(u-u_\Dx\right)\, dy = 0,
\end{equation*}
and therefore
\begin{equation*}
	W_1(u(t),u_\Dx(t)) \geq W_1\left(\tilde{u}(t),\tilde{u}_\Dx (t)\right).
\end{equation*}
We now estimate the $W_1$-error between $\tilde{u}$ and $\tilde{u}_\Dx$ from below.
\begin{proposition}\label{prop:biggerthansum}
Let $u_0$ be as described above and $u_\Dx$ the numerical approximation \eqref{eq:numericalsolution}. Then for $0<t<T$, 
\begin{align*}
W_1 \left(u(t), u_\Dx (t) \right) \geq \sum_{n=0}^{N} W_1\left(\proj \tilde{u}_\Dx(t^n-), \tilde{u}_\Dx(t^n-) \right),  
\end{align*}
where $N$ is such that $t \in [t^N, t^{N+1})$ and $u_\Dx(t^n-)$ is the numerical approximation right before averaging.
\end{proposition}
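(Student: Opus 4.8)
The plan is to track the primitive of the error between the truncated solutions,
\[
\tilde{E}(x,t) := \int_{-\infty}^x \bigl(\tilde{u}_\Dx(y,t)-\tilde{u}(y,t)\bigr)\, dy,
\]
and to establish two facts: that $\tilde{E}(\cdot,t)\geq 0$ for every $t\geq 0$, and that $t\mapsto\|\tilde{E}(t)\|_{L^1(\R)}$ is nondecreasing away from the grid times while jumping across each grid time $t^n$ by exactly $W_1\bigl(\proj\tilde{u}_\Dx(t^n-),\tilde{u}_\Dx(t^n-)\bigr)$. Since, as shown just above the statement, $W_1(u(t),u_\Dx(t))\geq W_1(\tilde{u}(t),\tilde{u}_\Dx(t))=\|\tilde{E}(t)\|_{L^1(\R)}$ (here $\tilde{u}-\tilde{u}_\Dx$ is compactly supported with vanishing mass, because the cut-off curve $x\mapsto x^*+f'(M)t$ carries the value $M$ for both $\tilde{u}$ and $\tilde{u}_\Dx$), summing these jumps over $n=0,\dots,N$ yields the asserted bound.

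First I would verify that the structural results of Section~\ref{subsec: error equation} carry over to the truncated pair $(\tilde{u},\tilde{u}_\Dx)$. On each interval $(t^n,t^{n+1})$ the approximation $\tilde{u}_\Dx$ evolves by the exact operator $\evo$ from a piecewise constant profile, hence is continuous and nondecreasing there, and $\tilde{u}$ is nondecreasing as well; consequently the transport equation \eqref{eq:erroreq} holds for $\tilde{E}$ and preserves its sign, and the identity $\frac{d}{dt}\|\tilde{E}(t)\|_{L^1(\R)}=\int_\R D_x a(\tilde{u},\tilde{u}_\Dx)(t)\,|\tilde{E}(t)|\,dx$ from Section~\ref{subsec: error equation} applies. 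As $D_x a(\tilde{u},\tilde{u}_\Dx)\geq\frac{\alpha}{2}(\partial_x\tilde{u}+\partial_x\tilde{u}_\Dx)\geq 0$ for nondecreasing profiles, $\|\tilde{E}(t)\|_{L^1(\R)}$ is nondecreasing on $(t^n,t^{n+1})$ and on $(t^N,t)$. At a grid time, formula \eqref{eq:numericalsolution} shows the scheme acts by $\tilde{u}_\Dx(t^n+)=\proj\tilde{u}_\Dx(t^n-)$, the cut-off commuting with $\proj$ here because $\tilde{u}_\Dx$ equals $M$ on entire cells to the right of the characteristic, while $\tilde{u}$ is continuous in time.

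The estimate is then a telescoping argument started from $\tilde{E}(\cdot,0-)\equiv 0$. Assume inductively that $\tilde{E}(\cdot,t^n-)\geq 0$. From the identity
\[
\tilde{E}(x,t^n+)=\tilde{E}(x,t^n-)+\int_{-\infty}^x\bigl(\proj\tilde{u}_\Dx(t^n-)-\tilde{u}_\Dx(t^n-)\bigr)(y)\, dy
\]
both summands on the right are nonnegative, the first by the inductive hypothesis and the second by Lemma~\ref{lem: positivity projection}; hence $\tilde{E}(\cdot,t^n+)\geq 0$ (which also follows from Lemma~\ref{lem:positivity}), and integrating in $x$ gives
\[
\|\tilde{E}(t^n+)\|_{L^1(\R)}=\|\tilde{E}(t^n-)\|_{L^1(\R)}+W_1\bigl(\proj\tilde{u}_\Dx(t^n-),\tilde{u}_\Dx(t^n-)\bigr),
\]
the last term being $\int_\R\bigl|\int_{-\infty}^x(\proj\tilde{u}_\Dx(t^n-)-\tilde{u}_\Dx(t^n-))(y)\,dy\bigr|\,dx$ by the one-dimensional formula for $W_1$. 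Combining this with $\|\tilde{E}(t^{n+1}-)\|_{L^1(\R)}\geq\|\tilde{E}(t^n+)\|_{L^1(\R)}$, with $\|\tilde{E}(t)\|_{L^1(\R)}\geq\|\tilde{E}(t^N+)\|_{L^1(\R)}$, and iterating down to $n=0$ collapses the chain to $\|\tilde{E}(t)\|_{L^1(\R)}\geq\sum_{n=0}^N W_1\bigl(\proj\tilde{u}_\Dx(t^n-),\tilde{u}_\Dx(t^n-)\bigr)$, which together with $W_1(u(t),u_\Dx(t))\geq\|\tilde{E}(t)\|_{L^1(\R)}$ is exactly the claim.

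The telescoping is essentially forced once the sign bookkeeping of Lemmas~\ref{lem: positivity projection} and~\ref{lem:positivity} is in place, so I expect the real work to lie in two technical checks. The first is that truncating along the characteristic $x^*+f'(M)t$ genuinely decouples the increasing part of the problem: no mass crosses the moving interface (so that $W_1(\tilde{u}(t),\tilde{u}_\Dx(t))=\|\tilde{E}(t)\|_{L^1(\R)}$), and the cut-off commutes with both $\evo$ within a time step and $\proj$ at grid times, so that $\tilde{u}_\Dx(t^n+)=\proj\tilde{u}_\Dx(t^n-)$ holds with the cut-off in force; this is where one uses that $x^*$ may be chosen with enough room that $x^*+f'(M)t^n$ stays inside the region on which $u_\Dx$ equals $M$ over whole cells. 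The second is justifying the differentiation of $t\mapsto\|\tilde{E}(t)\|_{L^1(\R)}$ on $(t^n,t^{n+1})$ with $D_x a(\tilde{u},\tilde{u}_\Dx)$ read as a nonnegative measure, given that $\tilde{u}_\Dx$ is only piecewise constant at $t^n+$ and merely Lipschitz thereafter.
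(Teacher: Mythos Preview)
Your proposal is correct and follows essentially the same route as the paper: both establish nonnegativity of the primitive error $\tilde{E}$ via Lemmas~\ref{lem: positivity projection} and~\ref{lem:positivity}, use the monotonicity of $\|\tilde{E}(t)\|_{L^1}$ between grid times from Section~\ref{subsec: error equation}, split the error at each $t^n$ into the projection contribution plus the pre-projection error, and telescope. Your additional remarks on the commutation of the cut-off with $\proj$ and the regularity needed for the differentiation step are technical points the paper leaves implicit, but they do not change the argument.
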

\begin{proof}
Let $t \in [t^N, t^{N+1})$ and $E_\Dx(x,t) = \int_{-\infty}^x (\tilde{u}_\Dx - \tilde{u})(y,t) \, dy$. Because of Lemma \ref{lem: positivity projection}
\begin{equation*}
	E_{\Dx}(x,0)=\int_{-\infty}^x (\tilde{u}_{\Dx}- \tilde{u})(y,0)\, dy = \int_{-\infty}^x (\proj \tilde{u}_0-\tilde{u}_0)(y)\,dy \geq 0
\end{equation*}
for all $x\in\R$.
The fact that $E_\Dx$ satisfies the transport equation \eqref{eq:erroreq} and Lemma \ref{lem:positivity} imply that $E_\Dx$ is nonnegative for all $t \geq 0$. Hence, 
\begin{align*}
W_1 \left(u(t), u_\Dx (t) \right)\geq W_1\left(\tilde{u}(t),\tilde{u}_\Dx(t)\right) = \int_\R E_{\Dx}(x,t)\, dx= \int_\R  \int_{-\infty}^x (\tilde{u}_\Dx-\tilde{u})(y,t) \, dy dx.
\end{align*}
As $\tilde{u}_0$ is nondecreasing and the conservation law \eqref{eq:conslaw} and the scheme \eqref{eq:monotonemethods} are monotonicity preserving, $\tilde{u}$ and $\tilde{u}_\Dx$ will be nondecreasing at any later time. It follows from the argument in Section \ref{subsec: error equation} that for $t\in [t^N,t^{N+1})$ the $W_1$ error between $\tilde{u}$ and $\tilde{u}_\Dx$ will be nondecreasing. Hence,
\begin{align*}
W_1 \left(\tilde{u}(t), \tilde{u}_\Dx (t) \right) & \geq W_1 \left(\tilde{u}(t^N), \proj \tilde{u}_\Dx (t^N) \right) \\
& = \int_\R  \int_{-\infty}^x (\proj \tilde{u}_\Dx-\tilde{u})(y,t^N) \, dy dx \\
 & = \int_\R  \int_{-\infty}^x (\proj \tilde{u}_\Dx - \tilde{u}_\Dx)(y,t^N-) \, dy dx + \int_\R  \int_{-\infty}^x (\tilde{u}_\Dx-\tilde{u})(y,t^N-) \, dy dx,
\end{align*}
where, in the last line, we have added and subtracted $\tilde{u}_\Dx(t^N)$. We can now continue the same procedure on the last term in the above $N$ times, and we end up with
\begin{align*}
W_1 \left(u(t), u_\Dx (t) \right) \geq \sum_{n=0}^{N} \int_\R  \int_{-\infty}^x (\proj \tilde{u}_\Dx - \tilde{u}_\Dx)(y,t^n-) \, dy dx = \sum_{n=0}^{N} W_1 \left(\proj \tilde{u}_\Dx(t^n-), \tilde{u}_\Dx (t^n-) \right),
\end{align*}
which is what we wanted to prove.
\end{proof}

In order to conclude that the $O(\Dx)$ rate is optimal in $W_1$, we need to show that for the increasing part of $u_0$ the projection error $W_1 \left(\proj \tilde{u}_\Dx(t^n-), \tilde{u}_\Dx (t^n-) \right)$ is bounded from below by $C \Dx\Dt$ for any $0\leq t^n < T$. 
\begin{proposition}\label{prop:W1projerror}
Let $u_0$ be increasing and assume that $\beta_1 \geq f' \geq \beta_2 > 0$ on $[-M,M]$, where $M = \|u \|_{L^\infty(\R)}$. Then if 
\begin{align*}
\lambda \leq \frac{1}{2\beta_1},
\end{align*} 
we have
\begin{align*}
W_1\left(\proj u_\Dx(t^n-), u_\Dx(t^n-) \right) \geq \frac{\Dx\Dt}{2}\left(1-\beta_1 \lambda \right)\beta_2  \mathrm{TV} \left( u_0 \right).
\end{align*}
\end{proposition}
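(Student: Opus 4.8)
The plan is to convert the Wasserstein projection error into a sum of cell‑by‑cell integrals of the primitive of $\proj v-v$, where $v:=u_\Dx(t^n-)$, and then to bound each of these from below using the precise structure of $v$ on a single cell. Since $v$ is nondecreasing, Lemma~\ref{lem: positivity projection} gives $g(x):=\int_{-\infty}^x(\proj v-v)(y)\,dy\geq 0$, and conservation of mass makes $g$ vanish at every grid point, so
\begin{align*}
W_1(\proj v,v)=\int_\R g(x)\,dx=\sum_i\int_{\cell_i}g_i(x)\,dx,\qquad g_i(x):=\int_{x_\imhf}^x\bigl(\bar v_i-v(y)\bigr)\,dy,
\end{align*}
where $\bar v_i$ is the value of $\proj v$ on $\cell_i$, $g_i\geq 0$, and $g_i(x_\imhf)=g_i(x_\iphf)=0$. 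Because $v$ is nondecreasing, $g_i'=\bar v_i-v$ is nonincreasing, so $g_i$ is concave on $\cell_i$; a nonnegative concave function vanishing at the endpoints of an interval of length $\Dx$ dominates the triangle through its peak, whence $\int_{\cell_i}g_i\geq\frac{\Dx}{2}\max_{\cell_i}g_i$. It therefore suffices to bound $\max_{\cell_i}g_i$ from below, cell by cell.

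The next and central step is the structure of $v$ on a cell. For $n\geq1$ one has $v=\evo_\Dt w$, where $w$ is the piecewise constant state at $t^{n-1}$, with value $u_i^{n-1}$ on $\cell_i$ and $u_{i-1}^{n-1}\leq u_i^{n-1}$ at the interface $x_\imhf$. Since $f$ is convex and $w$ nondecreasing, the Riemann problem at $x_\imhf$ opens a rarefaction fan, so after one time step $v$ equals the constant $u_{i-1}^{n-1}$ on $[x_\imhf,x_\imhf+c_L]$ with $c_L=f'(u_{i-1}^{n-1})\Dt$, increases through a fan from $u_{i-1}^{n-1}$ to $u_i^{n-1}$, and then equals the constant $u_i^{n-1}$ on $[x_\imhf+c_R,x_\iphf]$ with $c_R=f'(u_i^{n-1})\Dt$. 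The hypothesis $\lambda\leq1/(2\beta_1)$ enters precisely here: it gives $c_R\leq\beta_1\Dt=\beta_1\lambda\Dx\leq\Dx/2$, which is exactly what prevents the fans emanating from the two interfaces of $\cell_i$ from interacting and forces the right constant state $u_i^{n-1}$ to occupy at least the whole right half of $\cell_i$; similarly $c_L\geq\beta_2\Dt>0$.

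Using this, let $x_i^*\in\cell_i$ be the point where $v$ reaches the level $\bar v_i$ (equivalently where $g_i$ attains its maximum); since $\bar v_i<u_i^{n-1}$ whenever there is variation in the cell, $x_i^*\leq x_\imhf+c_R$, and from $g_i(x_\iphf)=0$ and monotonicity of $v$,
\begin{align*}
\max_{\cell_i}g_i=g_i(x_i^*)=\int_{x_i^*}^{x_\iphf}\bigl(v-\bar v_i\bigr)\,dy\geq\int_{x_\imhf+c_R}^{x_\iphf}\bigl(u_i^{n-1}-\bar v_i\bigr)\,dy=(\Dx-c_R)\bigl(u_i^{n-1}-\bar v_i\bigr).
\end{align*}
For the second factor, since $v\equiv u_{i-1}^{n-1}$ on a subinterval of length $c_L$,
\begin{align*}
u_i^{n-1}-\bar v_i=\frac{1}{\Dx}\int_{\cell_i}\bigl(u_i^{n-1}-v\bigr)\,dy\geq\frac{c_L}{\Dx}\,\delta_i\geq\beta_2\lambda\,\delta_i,\qquad\delta_i:=u_i^{n-1}-u_{i-1}^{n-1},
\end{align*}
while $\Dx-c_R\geq(1-\beta_1\lambda)\Dx$. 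Combining these with the triangle bound and $\lambda\Dx=\Dt$ yields $\int_{\cell_i}g_i\geq\frac{\Dx\Dt}{2}(1-\beta_1\lambda)\beta_2\,\delta_i$, and summing over $i$, together with $\sum_i\delta_i=\operatorname{TV}(w)=\operatorname{TV}(u_0)$ (the total variation of a nondecreasing, eventually constant function is preserved by $\proj$ and $\evo_\Dt$), gives the assertion.

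I expect the main obstacle to be the second step: justifying rigorously the wave geometry on each cell and, in particular, using the CFL restriction to ensure the profile is exactly one confined rarefaction fan flanked by two constant states, with the right state filling at least the right half of the cell. This "flat right piece" is essential — if $v$ instead varied across the whole cell (as for a purely linear profile) then $\max_{\cell_i}g_i$, and hence $\int_{\cell_i}g_i$, would be smaller by a constant factor and the stated lower bound would fail — so the argument genuinely hinges on the no‑interaction consequence of $\lambda\leq1/(2\beta_1)$. The remaining ingredients, namely the concavity/triangle inequality and the two elementary estimates on the factors $\Dx-c_R$ and $u_i^{n-1}-\bar v_i$, are routine.
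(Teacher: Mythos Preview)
Your argument is correct and lands on exactly the same per-cell lower bound as the paper, but it reaches the factor $\tfrac{1}{2}$ by a different device. The paper does not pass through $\max_{\cell_i}g_i$; instead it integrates by parts to obtain the identity
\[
\int_{\cell_i}g_i(x)\,dx=\frac{1}{2}\int_{\cell_i}\int_{x_\imhf}^x\bigl(v(x)-v(y)\bigr)\,dy\,dx,
\]
and then simply restricts the $x$-integral to the right plateau $[x_\imhf+c_R,x_\iphf]$ and the $y$-integral to the left plateau $[x_\imhf,x_\imhf+c_L]$, yielding $\tfrac{1}{2}(\Dx-c_R)\,c_L\,\delta_i$ in one stroke. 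Your route---concavity of $g_i$, the triangle bound $\int_{\cell_i}g_i\geq\tfrac{\Dx}{2}\max_{\cell_i}g_i$, then $\max_{\cell_i}g_i\geq(\Dx-c_R)(u_i^{n-1}-\bar v_i)$ and $u_i^{n-1}-\bar v_i\geq(c_L/\Dx)\delta_i$---is more geometric and makes the provenance of each factor explicit; the paper's identity is slicker but less transparent. Both rest on the same structural input, namely the two constant plateaus flanking the rarefaction fan inside $\cell_i$, and both arrive at the identical constant $\tfrac{\Dx\Dt}{2}(1-\beta_1\lambda)\beta_2$.

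One minor correction to your discussion of the CFL hypothesis: since $f'\geq\beta_2>0$, the fan emanating from $x_\iphf$ travels entirely into $\cell_{i+1}$ and never enters $\cell_i$, so there is no interaction of two fans inside $\cell_i$ to worry about. What the restriction on $\lambda$ actually guarantees is that the fan from $x_\imhf$ remains inside $\cell_i$ (i.e.\ $c_R\leq\Dx$), and for this $\lambda\leq 1/\beta_1$ already suffices. This does not affect your estimates.
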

\begin{proof}
From the positivity of the projection error and the conservation of mass in each cell $\cell_i$, we get that
\begin{align*}
W_1\left(\proj u_\Dx(t^n-), u_\Dx(t^n-) \right) & = \int_\R \int_{-\infty}^x (\proj u_\Dx - u_\Dx)(y,t^n-) \, dydx \\
&= \sum_i \int_{\cell_i}\int_{x_\imhf}^x \left( u_i^n - u_\Dx(y,t^n-) \right) \, dydx.
\end{align*}
Each term in the sum can be rewritten in the following way,
\begin{align}
\int_{\cell_i}\int_{x_\imhf}^x & \left( u_i^n - u_\Dx(y,t^n-) \right) \, dydx \notag \\
& =  \ \int_{\cell_i} (x-x_\imhf) \left( u_i^n - \frac{1}{(x-x_\imhf)}\int_{x_\imhf}^x u_\Dx(y,t^n-) \, dy \right) dx \notag\\
&=  \ \int_{\cell_i} \frac{1}{2}(x-x_\imhf)^2 \left( \frac{u_\Dx(x,t^n-)}{x-x_\imhf} - \frac{1}{(x-x_\imhf)^2}\int_{x_\imhf}^x u_\Dx(y,t^n-) \, dy \right) dx \notag\\
&=  \ \frac{1}{2} \int_{\cell_i} \left(u_\Dx(x,t^n-) (x-x_\imhf) - \int_{x_\imhf}^x u_\Dx(y,t^n-) \, dy \right) dx \notag\\
&=  \ \frac{1}{2} \int_{\cell_i} \int_{x_\imhf}^x ( u_\Dx (x,t^n-) -u_\Dx(y,t^n-) ) \, dydx, \label{last term}
\end{align}
after an integration by parts in the variable $x$ from the first to the second line (which is justified by the fact that $u_\Dx$ is continuous for $t>0$ as $f' \geq \beta_2 > 0$ and $f'' \geq \alpha > 0$).
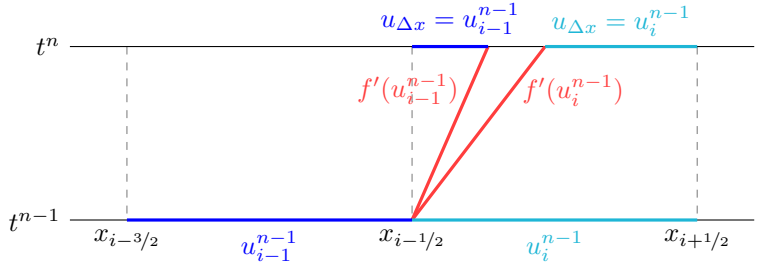
\begin{figure}[h]
\begin{center}
\begin{tikzpicture}[xscale=2.5, yscale=2.3]
	\draw[gray,dashed] (.5,1) -- (.5,0) node[below, black] {$x_\imthf$};
	\draw[gray,dashed] (2,1) -- (2,0) node[below, black] {$x_\imhf$};
	\draw[gray,dashed] (3.5,1) -- (3.5,0) node[below, black] {$x_\iphf$};
	\draw (3.8,0) -- (0.2,0) node[left] {$t^{n-1}$};
	\draw (3.8,1) -- (0.2,1) node[left] {$t^n$};
	\draw[myred,very thick] (2,0) -- (2.4,1) node[near end,left] {$f'(u_{i-1}^{n-1})$};
	\draw[blue,very thick] (2,1) -- (2.4,1) node[midway,above] {$u_\Dx=u_{i-1}^{n-1}$};
	\draw[myred,very thick] (2,0) -- (2.7,1) node[near end,right] {$f'(u_i^{n-1})$};
	\draw[mygreen,very thick] (2.7,1) -- (3.5,1) node[midway,above] {$u_\Dx =u_i^{n-1}$};
	\draw[blue,very thick] (.5,0) -- (2,0) node[midway,below] {$u_{i-1}^{n-1}$};
	\draw[mygreen,very thick] (2,0) -- (3.5,0) node[midway,below] {$u_i^{n-1}$};
\end{tikzpicture}
\end{center}
\caption{Transportation of the cell averages $u_{i-1}^{n-1}$ and $u_i^{n-1}$ when calculating $u_\Dx$ in $\cell_i$.}\label{fig: Riemann problem in cell_i}
\end{figure}
By integrating only over the part of $\cell_i$ where $u_\Dx$ is constant and ignoring the Riemann fan between $x_{\imhf}+f'(u_{i-1}^{n-1})\Dt$ and $x_{\imhf}+f'(u_i^{n-1})\Dt$ (see Figure \ref{fig: Riemann problem in cell_i}) we can bound the last term \eqref{last term} from below as follows:

\begin{align*}
\ \frac{1}{2} \int_{\cell_i} \int_{x_\imhf}^x & ( u_\Dx (x,t^n-) -u_\Dx(y,t^n-) ) \, dydx \\
& \geq   \frac{1}{2} \int_{x_\imhf+f'(u_i^{n-1})\Dt}^{x_\iphf} \int_{x_\imhf}^x ( u_\Dx (x,t^n-) -u_\Dx(y,t^n-) ) \, dydx \\
& \geq  \frac{1}{2} \int_{x_\imhf+f'(u_i^{n-1})\Dt}^{x_\iphf} \int_{x_\imhf}^{x_\imhf+f'(u_{i-1}^{n-1})\Dt} \left(u_\Dx (x,t^n-) -u_\Dx(y,t^n-)\right) \, dydx \\
& =  \frac{1}{2} f'(u_{i-1}^{n-1})\Dt \int_{x_\imhf+f'(u_i^{n-1})\Dt}^{x_\iphf} \left( u_\Dx(x,t^n-)- u_{i-1}^{n-1} \right) \, dx\\
& =  \frac{1}{2} \left(\Dx - f'(u_i^{n-1})\Dt \right)f'(u_{i-1}^{n-1})\Dt \left(u_i^{n-1}-u_{i-1}^{n-1} \right)
\end{align*}
Then, summing up,
\begin{align*}
\int_{\cell_i}\int_{x_\imhf}^x \left( u_\Dx(x,t^n-) - u_\Dx(y,t^n-) \right) \, dydx \geq & \  \frac{\Dx\Dt}{2}(1-f'(u_i^{n-1})\lambda)f'(u_{i-1}^{n-1}) \left(u_i^{n-1}-u_{i-1}^{n-1} \right) \\
\geq & \ \frac{\Dx\Dt}{2}(1-\beta_1\lambda)\beta_2 \left(u_i^{n-1}-u_{i-1}^{n-1} \right).
\end{align*}
Summing over all $i$, the result follows.
\end{proof}
We are now ready to prove Theorem \ref{th:optimality}.
\begin{proof}[Proof of Theorem \ref{th:optimality}]
Let $f$ be strictly convex. Without loss of generality we can assume $\beta_1\geq f'\geq \beta_2>0$ on $[-M,M]$ (otherwise we consider $\hat{u}=u+C$ and $\hat{u}_\Dx = u_\Dx+C$ for some suitable constant $C$, which will not affect the Wasserstein distance).
Combining Proposition \ref{prop:W1projerror} with Proposition \ref{prop:biggerthansum}, we find that
\begin{align*}
W_1(u(t),u_\Dx(t)) \geq \frac{\beta_2}{2}(1-\beta_1\lambda)t^N \textrm{TV}(u_0) \Dx,
\end{align*}
for $t\in [t^N,t^{N+1})$, which concludes the proof of Theorem \ref{th:optimality}.
\end{proof}

\section{Numerical experiments}

To illustrate our result, we consider two numerical experiments using Burgers' equation,
\begin{equation*}
	u_t + \left(\frac{u^2}{2}\right)_x = 0,
\end{equation*}
on the interval $[-1,1]$ with the initial data
\begin{equation*}
	u^1_0(x) = \begin{cases}
		0, & \phantom{-0.75\leq}~ x<-0.75,\\
		2x+1.5 & -0.75\leq x <-0.25,\\
		1, & -0.25\leq x < \phantom{-}0.25,\\
		-4x+2 & \phantom{-}0.25\leq x < \phantom{-}0.5,\\
		0, & \phantom{-0.75\leq}~ x\geq \phantom{-}0.5,
	\end{cases}
	\qquad\text{and}\qquad
	u^2_0(x) = \begin{cases}
		0, & x<0,\\
		1, & x\geq 0.
	\end{cases}
\end{equation*}
The first initial datum, $u_0^1$, is an example of the compactly supported $\Lip^+$-bounded $u_0$ in \eqref{eq:initialdata}, and therefore fits into the context considered in this paper. The second initial datum, $u_0^2$, on the other hand is $\Lip^+$-unbounded.
For both experiments we use the Godunov scheme, i.e., the monotone scheme \eqref{eq:monotonemethods} with the numerical flux function
\begin{equation*}
	F(a,b) = \frac{1}{2}\max(\max(a,0)^2,\min(b,0)^2),
\end{equation*}
and $\frac{\Dt}{\Dx} = 0.5$.
The exact solution for Experiment $1$ (for $t<0.25$) and Experiment $2$ is
\begin{equation*}
	u^1(x,t) = \begin{cases}
		0, & \phantom{-0.75+t\leq}~ x<-0.75,\\
		5x/3+ 5/4 & -0.75\phantom{+t}~\leq x <-0.25+t,\\
		1, & -0.25+t\leq x <\phantom{-}0.25+t,\\
		-20x/3+ 10/3 & \phantom{-}0.25 +t\leq x < \phantom{-}0.5,\\
		0, & \phantom{-0.75+t\leq}~ x\geq \phantom{-}0.5,
	\end{cases}
	\qquad\text{and}\qquad
	u^2(x,t) = \begin{cases}
		0, & \phantom{0<}~x<0,\\
		x/t, & 0\leq x<t,\\
		1, & \phantom{0<}~ x\geq t,
	\end{cases}
\end{equation*}
respectively.
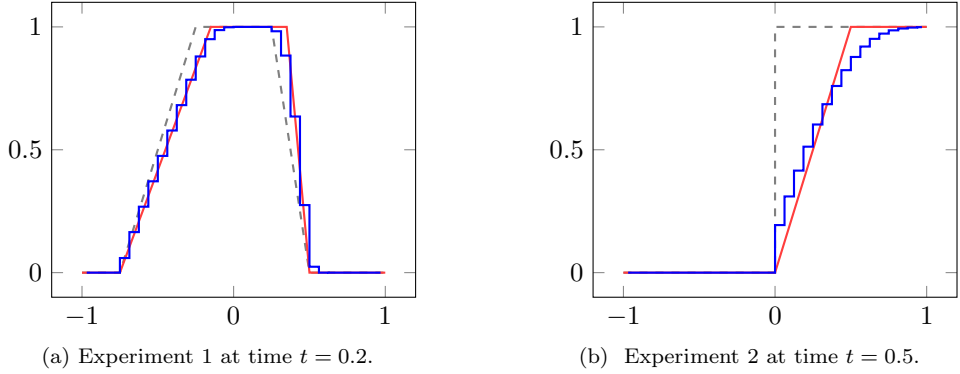
\begin{figure}[h]
\centering
\subfloat[Experiment $1$ at time $t=0.2$.]{
\begin{tikzpicture}
\begin{axis}
\addplot+[sharp plot, gray, mark=none, dashed, thick]
coordinates{
(-1,0)
(-0.75,0)
(-0.25,1)
(0.25,1)
(0.5,0)
(1,0)
};
\addplot+[sharp plot, myred, mark=none, thick]
coordinates{
(-1,0)
(-0.75,0)
(-0.15,1)
(0.35,1)
(0.5,0)
(1,0)
};
\addplot+[const plot mark mid,mark=none,blue,thick]
coordinates{%
( -0.96875 , 0.0 )
( -0.90625 , 0.0 )
( -0.84375 , 0.0 )
( -0.78125 , 0.0 )
( -0.71875 , 0.0594886208403 )
( -0.65625 , 0.164971916794 )
( -0.59375 , 0.268388873978 )
( -0.53125 , 0.371720484304 )
( -0.46875 , 0.475047619836 )
( -0.40625 , 0.578374603857 )
( -0.34375 , 0.681701587879 )
( -0.28125 , 0.785028571901 )
( -0.21875 , 0.879719199234 )
( -0.15625 , 0.950066897397 )
( -0.09375 , 0.987012624512 )
( -0.03125 , 0.998478999467 )
( 0.03125 , 1.0 )
( 0.09375 , 1.0 )
( 0.15625 , 1.0 )
( 0.21875 , 1.0 )
( 0.28125 , 0.982236030078 )
( 0.34375 , 0.883337578696 )
( 0.40625 , 0.635348236143 )
( 0.46875 , 0.274959989838 )
( 0.53125 , 0.0240642110838 )
( 0.59375 , 5.39541242233e-05 )
( 0.65625 , 3.76206552906e-11 )
( 0.71875 , 1.16415321827e-25 )
( 0.78125 , 0.0 )
( 0.84375 , 0.0 )
( 0.90625 , 0.0 )
( 0.96875 , 0.0 )
};
\end{axis}
\end{tikzpicture}
\label{fig: Graph bump}
}%
\hspace{4em}%
\subfloat[ Experiment $2$ at time $t=0.5$.]{
\begin{tikzpicture}
\begin{axis}
\addplot+[sharp plot, gray, mark=none, dashed, thick]
coordinates{
(-1,0)
(0,0)
(0,1)
(1,1)
};
\addplot+[sharp plot, myred, mark=none, thick]
coordinates{
(-1,0)
(0,0)
(0.5,1)
(1,1)
};
\addplot+[const plot mark mid,mark=none,blue,thick]
coordinates{
( -0.96875 , 0.0           )
( -0.90625 , 0.0           )
( -0.84375 , 0.0           )
( -0.78125 , 0.0           )
( -0.71875 , 0.0           )
( -0.65625 , 0.0           )
( -0.59375 , 0.0           )
( -0.53125 , 0.0           )
( -0.46875 , 0.0           )
( -0.40625 , 0.0           )
( -0.34375 , 0.0           )
( -0.28125 , 0.0           )
( -0.21875 , 0.0           )
( -0.15625 , 0.0           )
( -0.09375 , 0.0           )
( -0.03125 , 0.0           )
( 0.03125  , 0.193635350086 )
( 0.09375  , 0.310144302386 )
( 0.15625  , 0.415217703637 )
( 0.21875  , 0.51259219259  )
( 0.28125  , 0.602826525434 )
( 0.34375  , 0.685456449821 )
( 0.40625  , 0.759538537679 )
( 0.46875  , 0.823941125303 )
( 0.53125  , 0.877636937297 )
( 0.59375  , 0.920045029165 )
( 0.65625  , 0.95135557408  )
( 0.71875  , 0.972688598099 )
( 0.78125  , 0.985951792067 )
( 0.84375  , 0.993413486955 )
( 0.90625  , 0.997193230771 )
( 0.96875  , 0.998914031241 )
};
\end{axis}
\end{tikzpicture}
\label{fig: Graph upstep}
}%
\caption{Exact solution and numerical approximation and initial datum.}
\end{figure}
Figures \ref{fig: Graph bump} and \ref{fig: Graph upstep} show the initial data for Experiment 1 and 2 (respectively) in gray (dashed), the exact solutions in red (straight), and the numerical approximations calculated with the Godunov scheme in blue (piecewise constant).
\begin{table}[h]
\centering
\subfloat[Experiment 1 at time $t=0.2$.]{
\begin{tabular}{rcccc}
  \toprule
  \multicolumn{1}{c}{$n$} & $L^1$ OOC & $W_1$ OOC\\
  \midrule
	32  &  $0.822$ & $1.196$ \\
	64  &  $0.896$ & $1.123$ \\
	128 &  $0.861$ & $1.075$ \\
	256 &  $0.884$ & $1.046$ \\
	512 &  $0.900$ & $1.029 $ \\
  \bottomrule
\end{tabular}
\label{tbl: Rates bump}
}%
\hspace{4em}%
\subfloat[Experiment 2 at time $t=0.5$.]{
\centering
\begin{tabular}{rcccc}
  \toprule
  \multicolumn{1}{c}{$n$} & $L^1$ OOC & $W_1$ OOC\\
  \midrule
	32  &  $0.598$ & $0.764$ \\
	64  &  $0.641$ & $0.759$ \\
	128 &  $0.675$ & $0.761$ \\
	256 &  $0.708$ & $0.769$ \\
	512 &  $0.739$ & $0.782$ \\
  \bottomrule
\end{tabular}
\label{tbl: Rates upstep}
}%
\caption{Observed order of convergence in $L^1$ and $W_1$.}
\end{table}
%
%
Tables \ref{tbl: Rates bump} and \ref{tbl: Rates upstep} show the observed convergence rates of Experiment $1$ and $2$, where $n$ is the number of cells in the discretization. The first table clearly shows that the $W_1$ error is $O(\Dx)$ in Experiment $1$ and therefore numerically illustrates the optimality result of the present paper. The second table indicates that in the case of a single upward jump, i.e., $\Lip^+$-unbounded initial datum, we can expect a convergence rate of $O(\Dx|\log\Dx|)$ not only in $L^1$ as shown by Harabetian \cite{harabetian1988rarefactions}, but also in $W_1$ (see also Figure \ref{fig: OOC upstep}).
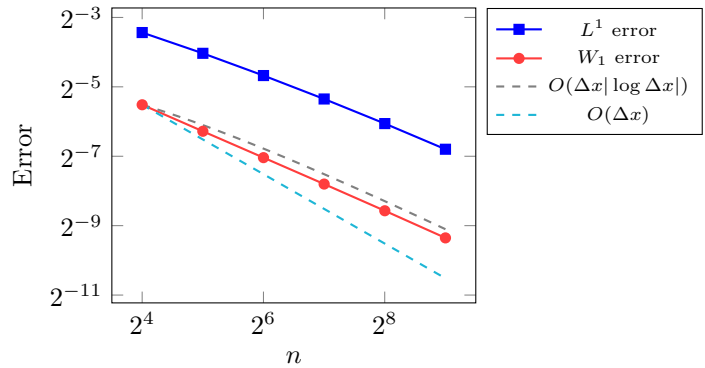
\begin{figure}[h]
\centering
\begin{tikzpicture}
\begin{loglogaxis}[
    log basis x={2},
    xlabel={$n$},
    log basis y={2},
    ylabel={Error},
    legend pos=outer north east,
    mark size=1.7pt
]
\addplot[mark=square*, thick, blue] table{
16  0.0924631083577 
32  0.0610739190534 
64  0.0391760034369 
128 0.0245307388874 
256 0.0150120554829 
512 0.00899235206971
};
\addlegendentry{\scriptsize $L^1$ error}
\addplot[mark=*, myred, thick] table {
16  0.021855878717  
32  0.0128721265976 
64  0.0076040556961 
128 0.00448638989534
256 0.00263195146715
512 0.00153105406619
};
\addlegendentry{\scriptsize $W_1$ error}
\addplot[domain=16:512, gray, dashed, thick]{(2/x)*log2(2/x)*(0.021855878717*16/(2*log2(2/16)))};
\addlegendentry{\scriptsize $O(\Dx|\log \Dx|)$}
\addplot[domain=16:512, mygreen, dashed, thick]{(2/x)*(0.021855878717*16/2)};
\addlegendentry{\scriptsize $O(\Dx)$}
\end{loglogaxis}
\end{tikzpicture}
\caption{Observed order of convergence in $L^1$ and $W_1$ compared with $O(\Dx|\log \Dx|)$ and $O(\Dx)$.}\label{fig: OOC upstep}
\end{figure}

\section{Concluding remarks}

In this paper we have shown optimality of the convergence rate $O(\Dx)$ in $W_1$ for monotone schemes in the case of $\Lip^+$-bounded initial data with compact support, and where the flux is assumed to be strictly convex. As noted in Table \ref{tab:survey of convergence rates} it is an open question whether the corresponding $L^1$ rate of $O(\Dx^\hf)$ is also optimal for this case since \c{S}abac's counter-example is $\Lip^+$-unbounded. Our numerical experiments (see Table \ref{tbl: Rates bump}) suggest that the counter-example considered here cannot be used to prove optimality of the rate $O(\Dx^\hf)$ in $L^1$ in this case. 

The convergence rate in $W_1$ for $\Lip^+$-unbounded initial data is still unknown. Our numerical test indicates that in the case of a rarefaction solution it could be the same as the $L^1$ rate, $O(\Dx|\log\Dx|)$. This is consistent with the rate $O(\epsilon|\log\epsilon|)$ proved in \cite{nessyahu1994convergence} for the viscous regularization of conservation laws with $\Lip^+$-unbounded initial data. Furthermore it can be heuristically explained by the same argument as in Section \ref{sec: Wasserstein} since the $L^1$ error in this case is $O(\Dx|\log\Dx|)$ \cite{harabetian1988rarefactions}.

Finally, to our knowledge there are currently no results on convergence rates in the Wasserstein distance for schemes for one-dimensional systems or for multidimensional conservation laws, although the $W_1$-distance can readily be defined in several dimensions.

\bibliographystyle{abbrv}

\end{document}